\begin{document}

\theoremstyle{plain}

\newtheorem{thm}{Theorem}
\newtheorem{lem}[thm]{Lemma}
\newtheorem{pro}[thm]{Proposition}
\newtheorem{cor}[thm]{Corollary}
\newtheorem{que}[thm]{Question}
\theoremstyle{definition}\newtheorem{rem}[thm]{Remark}
\theoremstyle{definition}\newtheorem{rems}[thm]{Remarks}
\theoremstyle{definition}\newtheorem{defi}[thm]{Definition}
\theoremstyle{definition}\newtheorem{Question}[thm]{Question}
\newtheorem{con}[thm]{Conjecture}

\newtheorem*{thmA}{Theorem A}
\newtheorem*{thmB}{Theorem B}
\newtheorem*{thmC}{Theorem C}

\newtheorem*{thmAcl}{Main Theorem$^{*}$}
\newtheorem*{thmBcl}{Theorem B$^{*}$}

\newcommand{\Maxn}{\operatorname{Max_{\textbf{N}}}}
\newcommand{\Syl}{\operatorname{Syl}}
\newcommand{\dl}{\operatorname{dl}}
\newcommand{\Con}{\operatorname{Con}}
\newcommand{\cl}{\operatorname{cl}}
\newcommand{\Stab}{\operatorname{Stab}}
\newcommand{\Aut}{\operatorname{Aut}}
\newcommand{\Ker}{\operatorname{Ker}}
\newcommand{\fl}{\operatorname{fl}}
\newcommand{\Irr}{\operatorname{Irr}}
\newcommand{\IBr}{\operatorname{IBr}}
\newcommand{\SL}{\operatorname{SL}}
\newcommand{\NN}{\mathbb{N}}
\newcommand{\N}{\mathbf{N}}
\newcommand{\C}{\mathbf{C}}
\newcommand{\OO}{\mathbf{O}}
\newcommand{\F}{\mathbf{F}}
\newcommand{\bk}{\mathbf{k}}
\newcommand{\FF}{\mathbb{F}}
\newcommand{\CF}{\mathcal{F}}
\newcommand{\CO}{\mathcal{O}}

\renewcommand{\labelenumi}{\upshape (\roman{enumi})}

\newcommand{\PSL}{\operatorname{PSL}}
\newcommand{\PSU}{\operatorname{PSU}}

\providecommand{\V}{\mathrm{V}}
\providecommand{\E}{\mathrm{E}}
\providecommand{\ir}{\mathrm{Irr_{rv}}}
\providecommand{\Irrr}{\mathrm{Irr_{rv}}}
\providecommand{\re}{\mathrm{Re}}

\def\Z{{\mathbb Z}}
\def\C{{\mathbb C}}
\def\Q{{\mathbb Q}}
\def\irr#1{{\rm Irr}(#1)}
\def\ibr#1{{\rm IBr}(#1)}
\def\irrv#1{{\rm Irr}_{\rm rv}(#1)}
\def \c#1{{\cal #1}}
\def\cent#1#2{{\bf C}_{#1}(#2)}
\def\syl#1#2{{\rm Syl}_#1(#2)}
\def\nor{\triangleleft\,}
\def\oh#1#2{{\bf O}_{#1}(#2)}
\def\Oh#1#2{{\bf O}^{#1}(#2)}
\def\zent#1{{\bf Z}(#1)}
\def\det{\mathrm{det}}
\def\ker#1{{\rm ker}(#1)}
\def\norm#1#2{{\bf N}_{#1}(#2)}
\def\alt#1{{\rm Alt}(#1)}
\def\iitem#1{\goodbreak\par\noindent{\bf #1}}
   \def \mod#1{\, {\rm mod} \, #1 \, }
\def\sbs{\subseteq}

\def\gc{{\bf GC}}
\def\ngc{{non-{\bf GC}}}
\def\ngcs{{non-{\bf GC}$^*$}}
\newcommand{\notd}{{\!\not{|}}}
\def\OG{{\mathcal{O}G}}
\def\foc{\mathfrak{foc}}
\def\hyp{\mathfrak{hyp}}
\def\CB{\mathcal{B}}
\def\CC{\mathcal{C}}
\def\tenO{\otimes_\CO}

\newcommand{\Out}{{\mathrm {Out}}}
\newcommand{\Br}{{\mathrm {Br}}}
\newcommand{\Mult}{{\mathrm {Mult}}}
\newcommand{\Inn}{{\mathrm {Inn}}}
\newcommand{\IBR}{{\mathrm {IBr}}}
\newcommand{\IBRL}{{\mathrm {IBr}}_{\ell}}
\newcommand{\IBRP}{{\mathrm {IBr}}_{p}}
\newcommand{\ord}{{\mathrm {ord}}}
\def\id{\mathop{\mathrm{ id}}\nolimits}
\renewcommand{\Im}{{\mathrm {Im}}}
\newcommand{\Ind}{{\mathrm {Ind}}}
\newcommand{\diag}{{\mathrm {diag}}}
\newcommand{\soc}{{\mathrm {soc}}}
\newcommand{\End}{{\mathrm {End}}}
\newcommand{\sol}{{\mathrm {sol}}}
\newcommand{\Hom}{{\mathrm {Hom}}}
\newcommand{\Mor}{{\mathrm {Mor}}}
\newcommand{\Mat}{{\mathrm {Mat}}}
\def\rank{\mathop{\mathrm{ rank}}\nolimits}
\newcommand{\Tr}{{\mathrm {Tr}}}
\newcommand{\tr}{{\mathrm {tr}}}
\newcommand{\Gal}{{\it Gal}}
\newcommand{\Spec}{{\mathrm {Spec}}}
\newcommand{\ad}{{\mathrm {ad}}}
\newcommand{\Sym}{{\mathrm {Sym}}}
\newcommand{\Char}{{\mathrm {char}}}
\newcommand{\pr}{{\mathrm {pr}}}
\newcommand{\rad}{{\mathrm {rad}}}
\newcommand{\abel}{{\mathrm {abel}}}
\newcommand{\codim}{{\mathrm {codim}}}
\newcommand{\ind}{{\mathrm {ind}}}
\newcommand{\Res}{{\mathrm {Res}}}
\newcommand{\Ann}{{\mathrm {Ann}}}
\newcommand{\Ext}{{\mathrm {Ext}}}
\newcommand{\Alt}{{\mathrm {Alt}}}
\newcommand{\AAA}{{\sf A}}
\newcommand{\SSS}{{\sf S}}
\newcommand{\RR}{{\mathbb R}}
\newcommand{\QQ}{{\mathbb Q}}
\newcommand{\ZZ}{{\mathbb Z}}
\newcommand{\NB}{{\mathbf N}}
\newcommand{\ZB}{{\mathbf Z}}
\newcommand{\EE}{{\mathbb E}}
\newcommand{\PP}{{\mathbb P}}
\newcommand{\GC}{{\mathcal G}}
\newcommand{\HC}{{\mathcal H}}
\newcommand{\GA}{{\mathfrak G}}
\newcommand{\TC}{{\mathcal T}}
\newcommand{\SC}{{\mathcal S}}
\newcommand{\RC}{{\mathcal R}}
\newcommand{\bG}{{ \bf G}}
\newcommand\bH{{\bf H}}
\newcommand{\bL} {{\bf L}}
\newcommand{\bM}{{\bf M}}
\newcommand{\bT}{{\bf T}}
\newcommand{\GCD}{\GC^{*}}
\newcommand{\TCD}{\TC^{*}}
\newcommand{\FD}{F^{*}}
\newcommand{\GD}{G^{*}}
\newcommand{\HD}{H^{*}}
\newcommand{\GCF}{\GC^{F}}
\newcommand{\TCF}{\TC^{F}}
\newcommand{\PCF}{\PC^{F}}
\newcommand{\GCDF}{(\GC^{*})^{F^{*}}}
\newcommand{\RGTT}{R^{\GC}_{\TC}(\theta)}
\newcommand{\RGTA}{R^{\GC}_{\TC}(1)}
\newcommand{\Om}{\Omega}
\newcommand{\eps}{\epsilon}
\newcommand{\al}{\alpha}
\newcommand{\chis}{\chi_{s}}
\newcommand{\sigmad}{\sigma^{*}}
\newcommand{\PA}{\boldsymbol{\alpha}}
\newcommand{\gam}{\gamma}
\newcommand{\lam}{\lambda}
\newcommand{\la}{\langle}
\newcommand{\ra}{\rangle}
\newcommand{\hs}{\hat{s}}
\newcommand{\htt}{\hat{t}}
\newcommand{\tn}{\hspace{0.5mm}^{t}\hspace*{-0.2mm}}
\newcommand{\ta}{\hspace{0.5mm}^{2}\hspace*{-0.2mm}}
\newcommand{\tb}{\hspace{0.5mm}^{3}\hspace*{-0.2mm}}
\def\skipa{\vspace{-1.5mm} & \vspace{-1.5mm} & \vspace{-1.5mm}\\}
\newcommand{\tw}[1]{{}^#1\!}
\renewcommand{\mod}{\bmod \,}

\marginparsep-0.5cm

\renewcommand{\thefootnote}{\fnsymbol{footnote}}
\footnotesep6.5pt

\title
[Dade and Alperin-Mckay conjectures]{Dade's ordinary conjecture implies 
the Alperin-McKay conjecture}

\author[Kessar]{Radha Kessar}
\address{Department of Mathematics, 
City, University of London EC1V 0HB, United Kingdom}
\email{radha.kessar.1@city.ac.uk}

\author[Linckelmann]{Markus Linckelmann}
\address{Department of Mathematics, 
City, University of  London EC1V 0HB, United Kingdom}
\email{markus.linckelmann.1@city.ac.uk}

\thanks{This material is based upon work supported by the National 
Science Foundation under Grant No. DMS-1440140 while the authors were in 
residence at the Mathematical Sciences Research Institute in Berkeley, 
California, during the  Spring  2018 semester. The second author 
acknowledges support from EPSRC grant EP/M02525X/1.}   

\keywords{Dade Ordinary Conjecture, Alperin-McKay conjecture}

\subjclass[2010]{20C20}

\begin{abstract} 
We show that  Dade's ordinary conjecture implies the Alperin-McKay 
conjecture.  
We remark that some of the methods can be used to identify
a canonical height zero character in a nilpotent block.

\end{abstract}

\maketitle


Dade proved in \cite{DadeII} that his projective conjecture 
\cite[15.5]{DadeII} implies the Alperin-McKay conjecture. 
Navarro showed in \cite[Theorem 9.27]{Nav18book} that the group version of 
Dade's ordinary conjecture implies the McKay conjecture. We show
here that Dade's ordinary conjecture \cite[6.3]{DadeI} implies the 
Alperin-McKay conjecture. Let $p$ be a prime number.

\begin{thm} \label{Theorem1}
If Dade's ordinary conjecture holds for all $p$-blocks of finite 
groups, then the Alperin-McKay conjecture holds for all $p$-blocks of
finite groups.
\end{thm}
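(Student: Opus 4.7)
My plan is to apply Dade's ordinary conjecture (DOC) twice --- to $B$ in $G$ and to its Brauer correspondent $b$ in $N:=N_G(D)$, where $D$ is a defect group of $B$ --- with defect parameter $d=d(B)=d(b)$, and then match the resulting alternating sums term by term. Characters of defect $d$ in these blocks are precisely the height zero characters, so after isolating the trivial chain we obtain, from DOC,
\begin{align*}
k_0(B) &= \sum_{|\sigma|\geq 1}(-1)^{|\sigma|+1}\sum_{b_\sigma^G=B} k^{d}\bigl(N_G(\sigma),b_\sigma\bigr), \\
k_0(b) &= \sum_{|\tau|\geq 1}(-1)^{|\tau|+1}\sum_{b_\tau^N=b} k^{d}\bigl(N_N(\tau),b_\tau\bigr),
\end{align*}
where $\sigma,\tau$ range over $G$- and $N$-orbits of chains of $p$-subgroups starting at $1$. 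The desired identity $k_0(B)=k_0(b)$ (Alperin-McKay) will follow from a termwise comparison of the two sums.

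The comparison proceeds by induction on $|G|$, after the standard reduction to $O_p(G)=1$ (so that $N$ is a proper subgroup of $G$ whenever $D\neq 1$, and every normalizer $N_G(\sigma)$ with $|\sigma|\geq 1$ is proper in $G$). A block $b_\sigma$ of $N_G(\sigma)$ with $b_\sigma^G=B$ contributes nontrivially to the $d$-sum only if its defect group is a $G$-conjugate of $D$, because defect groups of $b_\sigma$ are contained in $D$ by Brauer correspondence and must have order $p^{d(B)}$ for the character count at defect $d$ to be nonzero. Fix a maximal $B$-Brauer pair $(D,e_D)$. By $G$-transitivity of maximal Brauer pairs, every contributing chain $\sigma$ can be assumed to lie inside $D$, with $b_\sigma$ associated to a chain of $B$-subpairs below $(D,e_D)$. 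Since the stabilizer $N_G(D,e_D)$ sits inside $N$, the $G$-orbits of relevant pairs $(\sigma,b_\sigma)$ are in natural bijection with the $N$-orbits of pairs $(\tau,b_\tau)$ arising analogously for $b$ in $N$. On the block side, the Brauer first main theorem inside $N_G(\sigma)$, applied to its $p$-subgroup $D$, identifies each $b_\sigma$ with a block $b_\sigma'$ of $N_{N_G(\sigma)}(D)=N_N(\sigma)$, satisfying $(b_\sigma')^N=b$ and having defect group $D$.

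Because $N_G(\sigma)$ is a proper subgroup of $G$, the inductive Alperin-McKay applied to $b_\sigma$ yields
\[
k^d\bigl(N_G(\sigma),b_\sigma\bigr)=k^d\bigl(N_N(\sigma),b_\sigma'\bigr).
\]
Assembled with the orbit bijection, this shows that the two alternating sums above coincide termwise, so $k_0(B)=k_0(b)$. The principal obstacle will be the bookkeeping of orbits and Brauer correspondents: one must verify that the subpair structure below $(D,e_D)$ correctly indexes the contributing pairs $(\sigma,b_\sigma)$ in $G$ and matches the $N$-orbit structure on chains of $p$-subgroups of $N$ together with their contributing blocks, and one must reconcile any discrepancy between $N_G(D,e_D)$- and $N_G(D)$-conjugacy, likely via a Clifford-theoretic/Fong--Reynolds reduction so that the Brauer correspondent $b$ of $B$ in $N_G(D)$ can be replaced by the corresponding block of $N_G(D,e_D)$ without changing height zero counts. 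The $O_p(G)=1$ reduction and the base case of defect-zero blocks, where Alperin-McKay is trivial, are standard.
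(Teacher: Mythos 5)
Your overall strategy --- reduce to $O_p(G)=1$, use Dade's conjecture to express $k_0(B)$ as an alternating sum over chains, and then invoke Alperin--McKay inductively for the blocks of the proper subgroups $N_G(\sigma)$ appearing in that sum --- is in the same spirit as the paper's proof. But the paper does not compare two alternating sums term by term; it works with a \emph{single} alternating sum, reformulated over normal chains of nontrivial Brauer pairs, and collapses it directly to the single Brauer correspondent term $f([P])=\bk_d(N_G(P,e),e)$ by means of an explicit involution $n$ on fully normalised chains (adding or removing $N_P(\sigma)$), with the inductive Alperin--McKay supplying $f([\sigma])=f([n(\sigma)])$. This avoids the whole issue of matching $G$-orbits to $N_G(D)$-orbits; in effect the cancellation you get ``for free'' from $D\trianglelefteq N_G(D)$ is built into the involution on the $G$-side.

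There are two genuine gaps in what you have written. First, the matching between the $G$-indexed and $N$-indexed sums --- which you correctly flag as ``the principal obstacle'' --- is asserted rather than proved, and it is exactly where the real content lies. Passing from $G$-conjugacy to $N_G(D)$-conjugacy of chains of $p$-subgroups fails in general; the right objects are chains of Brauer pairs below $(D,e_D)$, on which conjugacy is controlled by $N_G(D,e_D)$, and one then needs to reconcile $N_G(D,e_D)$ with $N_G(D)$ (Fong--Reynolds) and show that after this translation the term-by-term identification is precisely the Alperin--McKay statement for the block of each $N_G(\sigma)$. None of this is routine bookkeeping; it is the substance of the paper's Lemma~\ref{lem:normal} and Proposition~\ref{pro:Dade}. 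Second, the ``standard reduction to $O_p(G)=1$'' is not standard in the sense you use it: it requires Murai's result that in a counterexample minimal first with respect to $|G:Z(G)|$ and then $|G|$ the group $O_p(G)$ is central, together with a nontrivial counting lemma (Lemma~\ref{lem:count}, relying on the focal-subgroup Lemma~\ref{lem:ker} and the extension Lemma~\ref{lem:height}) to descend to $G/O_p(G)$. Plain induction on $|G|$, as you propose, does not by itself give Murai's centrality; the choice of minimality is essential precisely so that the proper subgroups $N_G(\sigma)\supseteq Z(G)$ satisfy $|N_G(\sigma):Z(N_G(\sigma))|<|G:Z(G)|$ and hence lie in the scope of the inductive hypothesis.

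One further remark, not affecting correctness: you describe the expression for $k_0(b)$ as an application of Dade's ordinary conjecture to the Brauer correspondent $b$ in $N_G(D)$, but since $O_p(N_G(D))\neq 1$ that alternating sum vanishes for trivial (cone-point) reasons, independently of Dade's conjecture. So Dade is really applied only once, to $B$.
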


The proof combines arguments from Sambale \cite{Sa18} and formal
properties of chains of subgroups in fusion systems from 
\cite{Licontractible}. Let $(K, \CO, k)$ be a $p$-modular system. 
We assume that $k$ is algebraically  closed, and let $\bar K$  be an 
algebraic closure of  $K$. By a character of a finite group, we will 
mean a $\bar K$-valued character. For a finite group $G$ and a block 
$B$ of $\CO G$, let $\Irr(B)$ denote the set of irreducible characters
of $G$ in the block $B$, and let $\Irr_0(B)$ denote the set of 
irreducible height zero characters of $G$ in $B$. For a central 
$p$-subgroup $Z$ of $G$ and a character $\eta$ of $Z$, let 
$\Irr_0(B|\eta)$ denote the subset of $\Irr_0(B)$ consisting of those 
height zero characters  which cover the character $\eta$. 
The following lemma is implicit in \cite{Sa18}.

\begin{lem}  \label{lem:ker}
Let $P$ be a finite $p$-group, let $\CF$ be a saturated  fusion system on 
$P$  and let $Z \leq Z(\CF)$.  Suppose that $\eta$ is a linear 
character of $P$. There exists a linear character $\hat\eta$  of $P$ such 
that $\hat\eta|_Z =\eta|_Z$ and $\foc(\CF) \leq \Ker (\hat\eta)$.   
\end{lem}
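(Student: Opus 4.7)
The plan is to produce $\hat\eta$ as the pullback to $P$ of a linear character of the abelian quotient $P/\foc(\CF)$. Since $[P,P] \leq \foc(\CF)$, this quotient is abelian, so any character of its subgroup $Z\foc(\CF)/\foc(\CF) \cong Z/(Z \cap \foc(\CF))$ extends to a character of the whole group. Thus the existence of $\hat\eta$ reduces to the condition that $\eta|_Z$ be trivial on $Z \cap \foc(\CF)$. Because $\eta$ is a linear character of $P$, it is trivial on $[P,P]$, so the lemma reduces to the inclusion
\[
Z \cap \foc(\CF) \leq [P,P].
\]

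To establish this, I set $A := P/[P,P]$ and write $\bar{(-)}$ for images in $A$; the goal becomes $\bar Z \cap \overline{\foc(\CF)} = 0$ in $A$. The group $G := \Aut_\CF(P)/\Inn(P)$ is a $p'$-group by the Sylow axiom for saturated fusion systems, and since $\Inn(P)$ acts trivially on the abelian group $A$, $G$ acts on the $p$-group $A$ via coprime action; Maschke's theorem then splits $A = A^G \oplus [A,G]$. The image $\bar Z$ lies in $A^G$, because elements of $Z \leq Z(\CF)$ are fixed by every element of $\Aut_\CF(P)$, so it suffices to prove $\overline{\foc(\CF)} \leq [A,G]$. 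Focal generators coming from $\Aut_\CF(P)$ are manifestly of the form $(1-g)\bar x$ for some $g \in G$, hence in $[A,G]$. For generators coming from $\Aut_\CF(E)$ with $E \lneq P$ essential, I plan to use Alperin's fusion theorem for saturated fusion systems to reduce every $\CF$-morphism to such automorphisms and inclusions, and then invoke the extension axiom to lift each relevant $p'$-order automorphism of $E$ to an element of $\Aut_\CF(P)$ modulo inner modifications; inner modifications only alter focal generators by commutators, which lie in $[P,P]$ and therefore vanish in $A$.

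The main obstacle is this last step: confirming cleanly that contributions to $\foc(\CF)$ coming from essential proper subgroups actually land in $[A,G]$ after abelianization, since individual essential automorphisms need not extend to $P$ on the nose. A more abstract alternative, which bypasses the Alperin-style bookkeeping, is to invoke the characteristic biset of $\CF$ (Broto--Levi--Oliver, Ragnarsson) and the associated transfer map $\hat P \to \Hom(P/\foc(\CF),\bar K^\times)$, which fixes the values of a character at elements of $Z(\CF)$ up to a factor $|\Omega|/|P|$ coprime to $p$. Applying this transfer to $\eta$ then directly produces $\hat\eta$ with $\hat\eta|_Z = \eta|_Z$ and $\foc(\CF) \leq \Ker(\hat\eta)$, completing the proof.
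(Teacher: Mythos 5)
Your reduction of the lemma to the inclusion $Z \cap \foc(\CF) \leq [P,P]$ is sound and essentially matches the paper's strategy; that inclusion is precisely what the paper extracts from \cite[Lemma~4.3]{DGMP} (after a quotient trick reducing to the case where $\eta|_Z$ is faithful). However, your first route to proving it contains a genuine gap, and the intermediate claim it rests on is in fact false. Writing $A = P/[P,P]$ and $G = \Aut_\CF(P)/\Inn(P)$, you reduce to showing $\overline{\foc(\CF)} \leq [A,G]$. This fails in general: take $\CF = \CF_P(S_4)$ at $p = 2$ with $P \cong D_8$. Then $N_{S_4}(P) = P$, so $\Aut_\CF(P) = \Inn(P)$ and $G = 1$, giving $[A,G] = 0$; but $\foc(\CF) = P \cap [S_4,S_4] \cong V_4$ strictly contains $[P,P] \cong C_2$, so $\overline{\foc(\CF)} \neq 0$. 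The contributions to the focal subgroup from essential subgroups do \emph{not} land in $[A, \Out_\CF(P)]$, and the Maschke decomposition argument does not repair this. (Note that in this example $Z(\CF)=1$, so the lemma itself is vacuous here; the point is that your intermediate claim, which you hoped to prove unconditionally, is false.)

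Your proposed alternative via the characteristic biset is a legitimate route in principle: the transfer $t_\Omega : P \to P^{\mathrm{ab}}$ associated to a characteristic biset $\Omega$ does kill $\foc(\CF)$ (since $\Omega$ is $\CF$-stable, $t_\Omega$ identifies $x$ with $\varphi(x)$ for $\varphi \in \Hom_\CF(Q,P)$), and on $Z(\CF)$ it acts as raising to the power $|\Omega|/|P|$, which is prime to $p$; composing $\eta$ with $t_\Omega$ and a suitable $p'$-power gives the desired $\hat\eta$. But in the proposal this is left as a sketch with the key facts unverified, so it does not constitute a complete proof. The paper's route is shorter: it quotients by $Z_0 = \Ker(\eta|_Z)$ to reduce to the case $Z \cap [P,P] = 1$, invokes \cite[Lemma~4.3]{DGMP} to conclude $Z \cap \foc(\CF) = 1$, and then extends $\eta|_Z$ along the split inclusion $Z \hookrightarrow P/\foc(\CF)$. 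You should either supply the transfer argument in full or cite the DGMP lemma as the paper does.
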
 

\begin{proof}  
First  consider the case  that $\eta|_Z$ is faithful.   Then 
$Z \cap [P,P]=1 $.  Hence by  \cite[Lemma~4.3]{DGMP}, 
$\foc(\CF)\cap Z=1 $. The result is now immediate. Now suppose 
$Z_0 =\Ker( \eta|_Z )$ and let $\bar\CF =\CF/Z_0 $. By the previous 
argument, applied to $P/Z_0$ and $\bar F$, there exists a character 
$\hat\eta$ of $P/Z_0 $ such that $\hat\eta|_{Z/Z_0} = \eta|_{Z/Z_0}$ and  
$\foc(\bar \CF)  \leq \Ker (\hat\eta)$.  Denote also by $\hat\eta$ the  
inflation of $\hat\eta$ to  $P$. Then $\hat\eta$ has the required 
properties since  $\foc(\bar \CF) = \foc(\CF) Z_0/Z_0 $.  
\end{proof}

The following result is a special case of a result due to Murai; we 
include a proof for convenience.

\begin{lem}[cf. {\cite[Theorem~4.4]{Mu94}}]  \label{lem:height} 
Let $G$ be a  finite group, $B$ be a block of $\CO G$, and $P$ a defect
group of $B$. Let $Z$ be a central  $p$-subgroup of $G$ and let $\eta $ 
be an irreducible character of $Z$ such that 
$\Irr_0(B|\eta) \ne \emptyset$. Then $\eta$ extends to $P$.
\end{lem}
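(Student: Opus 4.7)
The plan is to reduce to the case of faithful $\eta$, and then exhibit the desired extension of $\eta$ to $P$ as a linear constituent of $\chi|_P$ for a chosen $\chi \in \Irr_0(B \mid \eta)$.

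For the reduction, let $N = \ker(\eta) \leq Z$, a normal $p$-subgroup of $G$. For any $\chi \in \Irr(B \mid \eta)$ and any $n \in N$, we have $\chi(n) = \chi(1)\eta(n) = \chi(1)$, so $N \leq \ker(\chi)$. Consequently $\chi$ deflates to $\bar\chi \in \Irr_0(\bar B \mid \bar\eta)$, where $\bar G = G/N$, $\bar B$ is the block of $\mathcal{O}\bar G$ corresponding to $B$ (with defect group $\bar P = P/N$), and $\bar\eta$ is the faithful character of $\bar Z = Z/N$ induced by $\eta$. An extension of $\bar\eta$ to $\bar P$ inflates to an extension of $\eta$ to $P$, so we may assume $\eta$ is faithful on $Z$, whence $Z$ is cyclic.

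Since $\eta(1) = 1$, any extension of $\eta$ to $P$ is automatically a linear character of $P$. Fix $\chi \in \Irr_0(B \mid \eta)$ and decompose $\chi|_P = \sum_i m_i \psi_i$ with $\psi_i \in \Irr(P)$. The centrality of $Z$ in $P$, together with $\chi|_Z = \chi(1)\eta$, forces $\psi_i|_Z = \psi_i(1)\eta$ for every $i$. Therefore \emph{any} linear constituent of $\chi|_P$ gives an extension of $\eta$ to $P$, and the problem reduces to producing such a linear constituent.

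The remaining step --- showing that $\chi|_P$ admits a linear constituent whenever $\chi$ has height zero in $B$ --- is the main obstacle, and it is the essential content of Murai's Theorem~4.4. The planned approach uses the source algebra of $B$ at a defect pointed group $(P_\gamma)$: for a source idempotent $i \in \gamma \subseteq (\mathcal{O}G)^P$, the height-zero hypothesis ensures that $\chi(i)$ is a $p'$-unit in $\mathcal{O}$, and the formula $u \mapsto \chi(iu)/\chi(i)$ for $u \in P$ should define a linear character $\lambda$ of $P$ that occurs in $\chi|_P$ with multiplicity coprime to $p$. Verifying that this formula indeed yields an honest linear character of $P$ appearing in $\chi|_P$ is where the real technical work lies, following source-algebra techniques adapted from Sambale.
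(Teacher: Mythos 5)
Your overall strategy is closely aligned with the paper's: reduce to a statement about the $P$-module $iV$ for a source idempotent $i\in B^P$, and use the Knörr/Picaronny--Puig fact that $\chi(i)=\dim_K(iV)$ is prime to $p$ for height zero $\chi$. The preliminary reduction to faithful $\eta$ is unnecessary but harmless, and the observation that any linear constituent of $\chi|_P$ would restrict to $\eta$ on $Z$ is correct.

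However, the proposed key formula does not work: the map $u\mapsto \chi(iu)/\chi(i)$ is the character of the $KP$-module $iV$ normalised by its degree, and this is not a homomorphism $P\to K^\times$ unless $iV|_P$ happens to be a multiple of a single linear character, which is false in general. So the step you flag as ``where the real technical work lies'' is in fact where the argument fails as stated.

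Two correct ways to finish. The paper's route: take $\delta$ to be the \emph{determinantal} character of the representation of $P$ on $iV$, so $\delta$ is a genuine linear character of $P$; since $Z$ acts on $iV$ by the scalar $\eta$, one gets $\delta|_Z=\eta^{\,n}$ with $n=\chi(i)$ coprime to $p$, and then $\delta^m$ extends $\eta$ where $mn\equiv 1$ modulo the exponent of $P$. An alternative closer to your ``linear constituent'' framing: over a splitting field every irreducible $KP$-module has $p$-power degree, so a $KP$-module of degree coprime to $p$ (such as $iV$) necessarily has a linear constituent, and any such constituent is a linear constituent of $\chi|_P$ lying over $\eta$. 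Either fix turns your sketch into a proof; the specific formula you wrote down does not.
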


\begin{proof}  
By  replacing $K$ by a suitable finite extension we may assume that 
$K$ is a splitting field for  all subgroups of $G$. Let $i\in$ $B^P$ 
be a source idempotent of $B$ and let $V$ be a $KG$-module affording 
an element of $\Irr_0(B|\eta)$. Then $n:= \dim_K(iV)$ is prime to $p$. 
Since $i$ commutes with $P$, $iV$ is a $KP$-module via $x\cdot iv= ixv$, 
where $x \in P, v \in V$. Let $\rho : P \to$ $\mathrm{GL}_n(K)$ be a 
corresponding representation and let $ \delta: P \to  K^{\times}$ be 
the  determinantal character of $\rho$. Then $\delta|_Z = \eta^n $.  
The result follows since $n$ is prime to $p$.   
\end{proof} 

\begin{lem} \label{lem:count} 
Let $G$ be a finite group, let $B$ be a block of $\CO G$ with a defect
group $P$, and let $Z$ be a central $p$-subgroup of $G$.  
Then $|\Irr_0(B)|$ equals the product of $|\Irr_0(B|1_Z)| $ with the 
number of distinct linear characters $\eta$ of $Z$ which extend to  $P$. 
\end{lem}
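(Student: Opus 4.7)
The plan is to partition $\Irr_0(B)$ according to the central character on $Z$ of each constituent and to show that every non-empty fibre has the same size. Since $Z\le\zent{G}$, for each $\chi\in\Irr(B)$ there is a uniquely determined linear character $\eta\in\Irr(Z)$ with $\chi|_Z=\chi(1)\eta$, so
\[
\Irr_0(B)\;=\;\bigsqcup_{\eta\in\Irr(Z)}\Irr_0(B|\eta).
\]
Lemma~\ref{lem:height} says that $\Irr_0(B|\eta)=\emptyset$ unless $\eta$ extends to a linear character of $P$. The statement therefore reduces to the equality $|\Irr_0(B|\eta)|=|\Irr_0(B|1_Z)|$ for every $\eta\in\Irr(Z)$ that extends to $P$.

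Fix such an $\eta$, and let $\CF$ denote the fusion system of $B$ on a defect group $P$. Since $Z\le\zent{G}$, we have $Z\le\zent{\CF}$, and Lemma~\ref{lem:ker}, applied to any chosen extension of $\eta$ to a linear character of $P$, furnishes $\hat\eta\in\Irr(P)$ with $\hat\eta|_Z=\eta$ and $\foc(\CF)\le\Ker\hat\eta$. I would use $\hat\eta$ to construct a bijection $\Irr_0(B|1_Z)\leftrightarrow\Irr_0(B|\eta)$ via the source algebra $A=iBi$ attached to a source idempotent $i\in B^P$. Under the Morita equivalence $V\mapsto iV$ between $B$ and $A$, the subset $\Irr_0(B|\eta')$ corresponds to the simple $A$-modules of $K$-dimension prime to $p$ on which $Z$, acting through the structural map $P\to A^\times$, $u\mapsto iu$, acts by $\eta'$. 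The point is that the condition $\foc(\CF)\le\Ker\hat\eta$ is exactly the compatibility needed to absorb $\hat\eta$ into a self-equivalence of the $P$-interior algebra $A$: there is an inner twist of the structural map by $\hat\eta$, which transports the simple $A$-modules with trivial $Z$-action to the simple $A$-modules on which $Z$ acts by $\eta$, preserving $K$-dimensions and hence preserving height zero.

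The main obstacle is precisely this source-algebra step, namely the realisation of the linear character $\hat\eta$ trivial on $\foc(\CF)$ as an inner twist of the $P$-interior structure on $A$. This is not a formal consequence of the set-up and leans on structural results about source algebras, but the condition $\foc(\CF)\le\Ker\hat\eta$ supplied by Lemma~\ref{lem:ker} is exactly what makes this step available. Granting it, summing $|\Irr_0(B|\eta)|=|\Irr_0(B|1_Z)|$ over the $\eta\in\Irr(Z)$ extending to $P$ yields the claimed product formula.
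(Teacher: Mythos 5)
Your proof follows essentially the same route as the paper: partition $\Irr_0(B)$ by the central character on $Z$, use Lemma~\ref{lem:height} to see that only $\eta$ extending to $P$ contribute, use Lemma~\ref{lem:ker} to obtain $\hat\eta\in\Irr(P)$ with $\foc(\CF)\le\Ker(\hat\eta)$, and then produce a bijection $\Irr_0(B|1_Z)\leftrightarrow\Irr_0(B|\eta)$. The only point where you diverge is the one you flag as the ``main obstacle'': instead of citing the relevant literature, you sketch a source-algebra twist and then write ``Granting it.'' That step is not a gap in the literature; it is exactly the Brou\'e--Puig $*$-construction. The condition $\foc(\CF)\le\Ker(\hat\eta)$ guarantees that $\hat\eta$ is $\CF$-stable, whence $\chi\mapsto\hat\eta*\chi$ is a height-preserving bijection on $\Irr(B)$ which multiplies the central character on $Z$ by $\eta$; this is what the paper invokes via \cite{BrPuloc}, \cite{Rob08}. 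Your source-algebra reading of this bijection as an inner twist of the $P$-interior structure on $A=iBi$ is likewise already in the literature --- it is the content of \cite[Theorem 1.1]{Linfoc}, which the paper itself quotes in a later remark as the structural explanation of the $*$-construction. So the argument is sound; what you should do is replace ``Granting it'' with a citation rather than leave it as an assumed step.
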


\begin{proof}  
Let $\CF=\CF_{(P,e_P)}(G, B)$ be the fusion system of $B$ with respect 
to a maximal $B$-Brauer pair $(P, e_P)$, and let $\eta$ be a linear 
character of $Z$ which extends to  $P$.  Since $Z \leq Z(\CF)$, by Lemma 
\ref{lem:ker}  there exists a linear character $\hat\eta$  of $P$ such 
that $\hat\eta|_Z =\eta $ and  $\foc(\CF) \leq$ $\Ker(\hat\eta)$.  
By the properties of the Brou\'e-Puig $*$-construction \cite{BrPuloc}, 
\cite{Rob08} the map $\chi \mapsto \hat\eta * \chi$ is a bijection 
between $\Irr_0( B|1_Z)$ and $\Irr_0(B|\eta) $. The result follows by 
Lemma \ref{lem:height}.
\end{proof} 

Slightly strengthening the terminology in \cite{Mu11}, we say that a 
pair $(G,B)$ consisting of a finite group $G$ and a block $B$ of $\OG$ 
is a {\it minimal counterexample to the Alperin-McKay conjecture} if 
$B$ is a counterexample to the Alperin-McKay conjecture and if $G$ is 
such that first $|G:Z(G)|$ is smallest possible and then $|G|$ is
smallest possible.

\begin{pro} \label{pro:min}
Let $(G,B)$ be a minimal counterexample to the Alperin-McKay 
conjecture. Then $O_p(G)=1 $. 
\end{pro}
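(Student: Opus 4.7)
Suppose, for contradiction, that $N := O_p(G) \neq 1$. The plan is to produce a non-trivial central $p$-subgroup $Z \leq N$ of $G$, apply Lemma~\ref{lem:count} to both $(G, B)$ and $(M, b)$ (with $M := N_G(P)$), and contradict the minimality of $(G, B)$ via the strictly smaller quotient pair $(G/Z, \bar B)$.

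The main case is $Z := N \cap Z(G) \neq 1$. Since $Z \leq Z(G) \leq Z(M)$, Lemma~\ref{lem:count} applied to both pairs gives
\begin{align*}
|\Irr_0(B)| &= |\Irr_0(B|1_Z)| \cdot c, \\
|\Irr_0(b)| &= |\Irr_0(b|1_Z)| \cdot c,
\end{align*}
where $c$ is the number of linear characters of $Z$ extending to $P$. Writing $\bar G = G/Z$, the block bijection for the normal $p$-subgroup $Z$ sends $B$ to a block $\bar B$ of $\CO \bar G$ with defect group $P/Z$, and $b$ to $\bar b$ of $\CO(M/Z)$; inflation then gives height-preserving bijections $\Irr_0(\bar B) \leftrightarrow \Irr_0(B|1_Z)$ and $\Irr_0(\bar b) \leftrightarrow \Irr_0(b|1_Z)$ (heights are preserved because $|\bar G|_p / |P/Z| = |G|_p / |P|$). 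As $Z(G)/Z \leq Z(\bar G)$ we have $|\bar G : Z(\bar G)| \leq |G : Z(G)|$, and $|\bar G| < |G|$, so $(\bar G, \bar B)$ is strictly smaller in the lexicographic minimality order. Minimality then yields $|\Irr_0(\bar B)| = |\Irr_0(\bar b)|$, and combining with the two factorizations gives $|\Irr_0(B)| = |\Irr_0(b)|$, contradicting that $(G, B)$ is a counterexample.

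The main obstacle is the residual case $N \cap Z(G) = 1$, in which no non-trivial central $p$-subgroup of $G$ exists, so Lemma~\ref{lem:count} is vacuous. My strategy here is to decompose $|\Irr_0(B)|$ and $|\Irr_0(b)|$ over the $G$-orbits on $\Irr(N)$ via Clifford theory. For a non-trivial orbit with stabilizer $T := \Stab_G(\theta) < G$, the Clifford correspondence for blocks gives a height-preserving bijection between $\Irr_0(B|\theta)$ and $\Irr_0(B_\theta|\theta)$ for a block $B_\theta$ of $T$; since $Z(G) \leq Z(T)$ yields $|T : Z(T)| \leq |G : Z(G)|$ and $|T| < |G|$, minimality applies to $(T, B_\theta)$. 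For the trivial orbit, $G/N$ satisfies $|G/N : Z(G/N)| \leq |G : Z(G)|/|N| < |G : Z(G)|$, so minimality applies directly. Finally, for $G$-invariant $\theta \in \Irr(N)$ with $\ker(\theta) \neq 1$, reduction modulo $\ker(\theta)$ achieves the same, while the case of faithful $G$-invariant $\theta$ is vacuous: $Z(N)$ would then be cyclic with $\theta|_{Z(N)}$ a faithful $G$-invariant linear character, and since a cyclic $p$-group admits faithful $G$-invariant characters only under trivial $G$-action, this would force $Z(N) \leq Z(G)$, contradicting $N \cap Z(G) = 1$. Coordinating these partial reductions with the analogous decomposition for $b$ in $M$ (using that $N$ is normal in $M$) is the technical heart of this residual case.
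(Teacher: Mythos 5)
Your treatment of the case $O_p(G)\cap Z(G)\neq 1$ is essentially the paper's argument: set $Z$ equal to a nontrivial central $p$-subgroup, apply Lemma~\ref{lem:count} to both $B$ and its Brauer correspondent $C$ (noting that both have defect group $P$, so the factor $c$ is the same), use domination to pass to $G/Z$, and invoke minimality. The one cosmetic difference is that you take $Z=O_p(G)\cap Z(G)$ whereas the paper takes $Z=O_p(G)$ itself, but this is harmless once one knows $O_p(G)$ is central.

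The decisive difference is the residual case $O_p(G)\cap Z(G)=1$. The paper disposes of it in one sentence by citing Murai's result \cite{Mu11}, which asserts precisely that $O_p(G)$ is \emph{central} in a minimal counterexample under this ordering; that citation makes your residual case vacuous. You instead attempt to re-derive this fact from scratch via a Clifford-theoretic decomposition of $\Irr_0(B)$ and $\Irr_0(b)$ over $G$-orbits on $\Irr(O_p(G))$, and here the argument is genuinely incomplete. Concretely: (a) the Clifford correspondence $\Irr_0(B|\theta)\leftrightarrow\Irr_0(B_\theta|\theta)$ needs to be coordinated with the corresponding decomposition inside $N_G(P)$ and shown to be compatible with Brauer correspondence and heights, and you explicitly defer this (``the technical heart''); (b) the reduction modulo $\ker(\theta)$ for a $G$-invariant non-faithful $\theta$ passes to $\bar G=G/\ker(\theta)$ where $\bar\theta$ is faithful, but the faithful-case argument you then want to run requires $O_p(\bar G)\cap Z(\bar G)=1$, which you have not verified (after the quotient $Z(\bar G)$ may be strictly larger than $Z(G)\ker(\theta)/\ker(\theta)$); (c) you also need $O_p(\bar G)=N/\ker(\theta)$, which is stated implicitly but not checked. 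These are not trivialities; they are essentially the content of Murai's two-page note. As written, the proposal therefore has a genuine gap in the residual case. Either cite \cite{Mu11} as the paper does, or carry the Clifford-theoretic reduction through in full with the compatibility of the two block-theoretic decompositions made explicit.
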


\begin{proof} 
By a result of Murai \cite{Mu11}, we have that $Z:=O_p(G) $ is 
central in $G$. Let $P$ be a defect group of $B$ and let $C$ be the 
block of $\CO N_G(P)$  in Brauer correspondence with $B$. By Lemma 
\ref{lem:count}, $|\Irr_0(B)|=|\Irr_0(C) |$ if and only if 
$|\Irr_0(\bar B)|=|\Irr_0(\bar C) |$ where $\bar B$ (respectively 
$\bar C$)  is the block of $\CO G/Z $ (respectively $\CO N_G(P)/Z $) 
dominated by $B$ (respectively $C$). The result follows since 
$N_{G/Z}(P/Z) =N_G(P)/Z $ and  $\bar B$ and $\bar C$ are in Brauer 
correspondence.
\end{proof}
 
Let $\CF$ be a saturated fusion system on a finite $p$-group $P$,
and let $\CC$ be a full subcategory of $\CF$ which is upwardly
closed; that is, if $Q$, $R$ are subgroups of $P$ such that
$Q$ belongs to $\CC$ and if $\Hom_\CF(Q,R)$ is nonempty, then
also $R$ belongs to $\CC$.   Drawing upon notation and facts from 
\cite[\S 5]{Licontractible},  $S_\vartriangleleft(\CC)$  is 
the category having as objects nonempty chains $\sigma =$ 
$Q_0<Q_1<\cdots <Q_m$ of subgroups $Q_i$ of $P$ belonging to $\CC$ such 
that $m\geq$ $0$ and $Q_i$ is normal in $Q_m$, for $0\leq i\leq m$. 
Morphisms in $S_\vartriangleleft(\CC)$ are given by certain `obvious' 
commutative diagrams of morphisms in $\CF$; see 
\cite[2.1, 4.1]{Licontractible} for details. With this notation, the  
{\it length of}  of a chain $\sigma$ in $S_\vartriangleleft(\CC)$ is 
the integer $|\sigma|=m$. The chain $\sigma$ is called {\it fully 
normalised} if $Q_0$ is fully $\CF$-normalised and if either $m=0$ or 
the chain $\sigma_{\geq 1}=$ $Q_1<Q_2<\cdots < Q_m$ is fully 
$N_\CF(Q_0)$-normalised. Every chain in $S_\vartriangleleft(\CC)$ is 
isomorphic (in the category $S_\vartriangleleft(\CC)$) to a fully 
normalised chain. There is an involution $n$ on the set of fully 
normalised chains which fixes the chain of length zero $P$ and which 
sends any other fully normalised chain $\sigma$ to a fully normalised 
chain $n(\sigma)$ of length $|\sigma| \pm 1$. This involution is 
defined as follows. If $\sigma=$ $P$, then set
$n(\sigma)=$ $\sigma$. If $\sigma=$ $Q_0<Q_1<\cdots<Q_m$ is a fully
normalised chain different from $P$ such that $Q_m=$ $N_P(\sigma)$, 
then define $\sigma$ by removing the last term $Q_m$; if 
$Q_m<N_P(\sigma)$, then define $\sigma$ by adding $N_P(\sigma)$ as last 
term to the chain. Then $n(\sigma)$ is fully normalised, and 
$n(n(\sigma))=\sigma$. Denote by $[S_\vartriangleleft(\CC)]$ the 
partially ordered set of isomorphism classes of chains in 
$S_\vartriangleleft(\CC)$, and for each chain $\sigma$ by $[\sigma]$ 
its isomorphism class. We have a partition
$$[S_\vartriangleleft(\CC)] = \{[P]\} \cup \CB \cup n(\CB)\ ,$$
where $\CB$ is the set of isomorphism classes of fully normalised
chains $\sigma$ satisfying $|n(\sigma)|=$ $|\sigma|+1$. The following
Lemma is a very special case of a functor cohomological statement
\cite[Theorem 5.11]{Licontractible}.

\begin{lem} \label{lem:normal}
With the notation above, let $f : [S_\vartriangleleft(\CC)]\to\Z$ be a
function on the set of isomorphism classes of chains in 
$S_\vartriangleleft(\CC)$ satisfying $f([\sigma])=$ $f([n(\sigma)])$
for any fully normalised chain $\sigma$ in $S_\vartriangleleft(\CC)$.
Then 
$$\sum_{[\sigma]\in [S_\vartriangleleft(\CC)]}\ (-1)^{|\sigma|} 
f([\sigma]) = f([P])\ .$$
\end{lem}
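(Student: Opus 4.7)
The plan is to exploit the partition
\[
[S_\vartriangleleft(\CC)] = \{[P]\} \cup \CB \cup n(\CB)
\]
supplied just before the statement, and to split the alternating sum according to these three pieces. Since the chain $P$ has length $0$, the singleton $\{[P]\}$ contributes exactly $(-1)^0 f([P]) = f([P])$, which is precisely the right-hand side. It therefore suffices to prove that the contributions of $\CB$ and $n(\CB)$ together vanish.

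To carry this out, I would first observe that $n$ induces a well-defined involution on $[S_\vartriangleleft(\CC)]$: every chain is isomorphic to a fully normalised one, and by construction $n$ sends a fully normalised chain to another fully normalised chain of length $|\sigma|\pm 1$, with $n\circ n = \mathrm{id}$. Hence $n$ restricts to a bijection $\CB \to n(\CB)$ sending $[\sigma]$ to $[n(\sigma)]$. For $[\sigma]\in \CB$ the defining property is $|n(\sigma)|=|\sigma|+1$, so $(-1)^{|n(\sigma)|}=-(-1)^{|\sigma|}$.

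Combining these observations, I would reindex the $n(\CB)$-sum via the involution and use the hypothesis $f([\sigma])=f([n(\sigma)])$ to write
\[
\sum_{[\sigma]\in \CB}(-1)^{|\sigma|} f([\sigma]) + \sum_{[\tau]\in n(\CB)} (-1)^{|\tau|}f([\tau])
= \sum_{[\sigma]\in \CB} \bigl((-1)^{|\sigma|}+(-1)^{|n(\sigma)|}\bigr) f([\sigma]) = 0,
\]
which gives the claim after adding back the $[P]$ term.

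I do not expect any genuine obstacle here, as the statement is a purely combinatorial cancellation once the partition and the sign-shifting property of $n$ are in hand. The only point requiring a brief check is that $n$ descends to isomorphism classes so that the partition $\{[P]\}\cup\CB\cup n(\CB)$ is a disjoint union in $[S_\vartriangleleft(\CC)]$; this is ensured by the fact that every chain is isomorphic to a fully normalised one and by the definition of $\CB$ and $n(\CB)$ recalled above from \cite[\S 5]{Licontractible}.
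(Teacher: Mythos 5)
Your proof is correct and is exactly the argument the paper gives, only spelled out in more detail: the paper's one-sentence proof says the contributions from $\CB$ cancel those from $n(\CB)$, and you make that cancellation explicit by reindexing via the involution $n$ and using the sign flip $(-1)^{|n(\sigma)|}=-(-1)^{|\sigma|}$.
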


\begin{proof}
The hypothesis on $f$ implies that the contributions from chains in 
$\CB$ cancel those from chains in $n(\CB)$, whence the result.
\end{proof}

\begin{pro} \label{pro:Dade}
Let $G$ be a finite group such that $O_p(G)=$ $1$, and let $B$ 
be a block of $\OG$ with  nontrivial defect groups. Suppose that Dade's 
ordinary conjecture holds for $B$ and that the Alperin-McKay conjecture 
holds for any block of any proper subgroup of $G$. Then the Alperin-McKay
conjecture holds for the block $B$.
\end{pro}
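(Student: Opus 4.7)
The plan is to apply Dade's ordinary conjecture to $B$ at defect $d(B)$, use the inductive Alperin--McKay hypothesis to replace each non-trivial term in the resulting alternating sum by a count inside $N_G(P)$, and then collapse the rearranged sum to $|\Irr_0(C)|$ by means of Lemma~\ref{lem:normal} applied to the fusion system of $B$. Let $P$ be a defect group of $B$ with maximal $B$-Brauer pair $(P,e_P)$, let $\CF=\CF_{(P,e_P)}(G,B)$, and let $C$ be the Brauer correspondent of $B$ in $N_G(P)$; the goal is $|\Irr_0(B)|=|\Irr_0(C)|$.

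First I would write out Dade's ordinary conjecture for $B$ at $d=d(B)$. Because $O_p(G)=1$ and the defect is nontrivial, the conjecture supplies the identity
$$0=\sum_{[\sigma]}(-1)^{|\sigma|}k_{d(B)}(N_G(\sigma),B_\sigma),$$
taken over $G$-conjugacy classes of radical $p$-chains $\sigma=(1<P_1<\cdots<P_m)$, with $B_\sigma$ the Brauer correspondent block of $B$ in $N_G(\sigma)$. A direct height--defect check shows that $k_{d(B)}(N_G(\sigma),B_\sigma)$ is zero unless $d(B_\sigma)=d(B)$, in which case it equals $|\Irr_0(B_\sigma)|$. Isolating the trivial chain $\{1\}$ gives
$$|\Irr_0(B)|=\sum_{[\sigma]\neq[\{1\}]}(-1)^{|\sigma|+1}|\Irr_0(B_\sigma)|.$$

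Next, for each non-trivial $\sigma$, I observe that $O_p(G)=1$ forces $N_G(\sigma)\lneqq G$, so the inductive Alperin--McKay hypothesis applies to $(N_G(\sigma),B_\sigma)$. Arranging Brauer pairs $(Q_0,f_0)\leq\cdots\leq(Q_m,f_m)\leq(P,e_P)$, we can take the defect group of $B_\sigma$ to be $P$; then $N_{N_G(\sigma)}(P)=N_{N_G(P)}(\sigma)$, and by transitivity of Brauer correspondence the block $b_\sigma:=$ (Brauer correspondent of $B_\sigma$ in $N_{N_G(\sigma)}(P)$) is equally the Brauer correspondent of $C$ along $\sigma$ inside $N_G(P)$. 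Induction therefore gives $|\Irr_0(B_\sigma)|=|\Irr_0(b_\sigma)|$ for every non-trivial $\sigma$.

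Now I would recast the sum via the fusion system. Every radical $p$-chain of $G$ with $B_\sigma$ of full defect is $G$-conjugate to a chain inside $P$, so after passing to $\CF$-isomorphism classes the Dade sum becomes a sum over $[S_\vartriangleleft(\CC)]$ for a suitable upwardly closed subcategory $\CC$ of $\CF$. Define $f([\sigma]):=|\Irr_0(b_\sigma)|$ for the non-trivial classes and $f([P]):=|\Irr_0(C)|$ on the length-zero chain. If one can verify $f([\sigma])=f([n(\sigma)])$ for every fully normalized $\sigma$, then Lemma~\ref{lem:normal} collapses
$$\sum_{[\sigma]\in[S_\vartriangleleft(\CC)]}(-1)^{|\sigma|}f([\sigma])=f([P])=|\Irr_0(C)|,$$
and comparison with the Dade identity above yields $|\Irr_0(B)|=|\Irr_0(C)|$, as required.

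The main obstacle is the $n$-invariance of $f$. Passing from $\sigma$ to $n(\sigma)$ means adjoining or deleting the top subgroup $N_P(\sigma)$, which alters the ambient normalizer inside $N_G(P)$ by a piece that normalizes $\sigma$ and normalizes the block $b_\sigma$. Here I would use Lemmas~\ref{lem:ker}, \ref{lem:height}, and \ref{lem:count} to compare height-zero characters across the two blocks: these lemmas show that height-zero characters of a block lying over a central $p$-subgroup are controlled by the factorisation through linear characters of the defect group whose kernel contains $\foc(\CF)$, and this control is insensitive to extending the chain by its $P$-normalizer. Once this local compatibility is established, the proof is complete.
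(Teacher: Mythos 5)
Your high-level strategy matches the paper's: invoke Dade's conjecture at defect $d(B)$, translate to chains in the fusion system of $B$, use the inductive Alperin--McKay hypothesis, and collapse the alternating sum via Lemma~\ref{lem:normal}. However, there is a genuine gap exactly where you flag ``the main obstacle'': the $n$-invariance of $f$ is asserted but not established, and the lemmas you propose to use for this purpose are the wrong ones. Lemmas~\ref{lem:ker}, \ref{lem:height} and \ref{lem:count} concern the interplay between height zero characters and central $p$-subgroups; they are the engine behind Proposition~\ref{pro:min} (the reduction to $O_p(G)=1$), not behind Proposition~\ref{pro:Dade}, and there is no obvious way to extract from them a comparison of height-zero counts across a chain and its $n$-image.

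The paper's route to $n$-invariance is more direct than yours and requires no second comparison of local blocks. The paper defines $f([\sigma])=\bk_d(N_G(\tau),e_\tau)$ (not a count in $N_G(P)$), notes that by \cite[5.14]{Licontractible} the group $N_P(\tau)$ is a defect group of $e_\tau$ as a block of $H:=N_G(\tau)$, and then observes the key identification $N_G(n(\tau))=N_H(N_P(\tau),e_{n(\tau)})$; that is, $(N_P(\tau),e_{n(\tau)})$ is a maximal Brauer pair for the block $e_\tau$ of $H$. With that in hand, the equality $f([\sigma])=f([n(\sigma)])$ is \emph{precisely} the Alperin--McKay statement for the block $e_\tau$ of the proper subgroup $H$, which is available by hypothesis. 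In your version you apply Alperin--McKay one step earlier (to pass from $B_\sigma$ to $b_\sigma$ inside $N_G(P)$), and then still need to argue that $b_\sigma=b_{n(\sigma)}$ and $N_{N_G(P)}(\sigma)=N_{N_G(P)}(n(\sigma))$ compatibly with the Brauer pair data; that step is not made, and in the form you propose it it does not reduce to the lemmas you cite. A smaller imprecision: the phrase ``we can take the defect group of $B_\sigma$ to be $P$'' only makes sense for the chains whose contribution survives the height--defect check; the cleaner bookkeeping (as in the paper) is to leave $f([\sigma])=\bk_d(N_G(\tau),e_\tau)$ as a defect-$d$ count, which is automatically $0$ whenever $N_P(\tau)<P$, so the vanishing and the $n$-invariance are handled uniformly.
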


\begin{proof}
Let $(P,e)$ be a maximal $B$-Brauer pair, and denote by $\CF$ the 
associated fusion system on $P$. For $d$ a positive integer, denote by 
$\bk_d(G,B)$ the number of ordinary irreducible characters in $B$ of 
defect $d$. If $p^d=$ $|P|$, then $\bk_d(G,B)$ is the number of height
zero characters, and if $p^d>|P|$, then $\bk_d(G,B)=0$. 

Let $\CC$ be the full subcategory of $\CF$ consisting of all 
nontrivial subgroups of $P$. We briefly describe the standard
translation process between chains in a fusion system of a block
and the associated chains of Brauer pairs.
The map sending a chain $\sigma=$ $Q_0<Q_1<\cdots< Q_m$ in 
$S_\vartriangleleft(\CC)$ to the unique chain of nontrivial 
$B$-Brauer pairs $\tau=$ $(Q_0,e_0)<(Q_1,e_1)<\cdots < (Q_m,e_m)$ 
contained in $(P,e)$ induces a bijection between isomorphism classes 
of chains in $S_\vartriangleleft(\CC)$ and the set of $G$-conjugacy 
classes of normal chains of nontrivial $B$-Brauer pairs (cf. 
\cite[2.5]{Licontractible}). If
$\sigma$ is fully normalised, then the corresponding chain of 
Brauer pairs $\tau=$ $(Q_0,e_0)<(Q_1,e_1)<\cdots<(Q_m,e_m)$ has the
property that $e_\tau=$ $e_m$ remains a block of $N_G(\tau)$, and
by \cite[5.14]{Licontractible}, $N_P(\sigma)=$ $N_P(\tau)$ is a defect 
group of $e_\tau$ as a block of $N_G(\tau)$.  Denote by $n(\tau)$ the 
chain of Brauer pairs corresponding to $n(\sigma)$. 

Let  $d>0$ such that $p^d=$ $|P|$. Define a function $f$ on 
$S_\vartriangleleft(\CC)$ by setting 
$$f([\sigma])= \bk_d(N_G(\tau),e_\tau)$$
for any fully normalised chain $\sigma$ and corresponding chain $\tau$ 
of Brauer pairs. If $N_P(\sigma)$ is a proper
subgroup of $P$, then $f([\sigma])=0$, and if $N_P(\sigma)=P$, then
$f([\sigma])$ is the number of height zero characters of the block
$e_\tau$ of $N_G(\tau)$. Dade's ordinary conjecture for $B$, 
reformulated here in terms of chains of Brauer pairs, asserts that
$\bk_d(G, B)$ is equal to the alternating sum
$$\sum_{[\sigma]\in S_\vartriangleleft(\CC)}\ (-1)^{|\sigma|} 
f([\sigma])\ .$$
The passage between formulations in terms of normalisers of chains
of Brauer pairs rather than normalisers of chains of $p$-subgroups
is well-known; see e.g. \cite[4.5]{LinBredon}, \cite{Rob04}. 

If $|n(\sigma)|=$ $|\sigma|+1$, then, setting $H=$ $N_G(\tau)$, 
we have $N_G(n(\tau))=$ $N_H(N_P(\tau), e_{n(\tau)})$; that is, 
$(N_P(\tau), e_{n(\tau)})$ is a maximal $(H, e_\tau)$-Brauer pair. 
By the assumptions, the  Alperin-McKay conjecture holds for the
block $e_\tau$ of $H$. 
This translates to the equality $f([\sigma])=$ $f([n(\sigma)])$.
That is, the function $f$ satisfies the hypotheses of Lemma 
\ref{lem:normal}. Thus the above alternating sum is
equal to $f([P])$, which by definition is $\bk_d(N_G(P,e), e)$, and
thus the Alperin-McKay conjecture holds for $B$.
\end{proof}

Theorem \ref{Theorem1} follows now immediately from combining 
Propositions \ref{pro:min} and \ref{pro:Dade}.

\begin{rem}
By work of Dade \cite{Dade80} and Okuyama and Wajima \cite{OkWa},
the Alperin-McKay conjecture holds for blocks of finite $p$-solvable
groups. G. R. Robinson pointed out that Proposition \ref{pro:min}
yields another short proof of this fact.
\end{rem}

\begin{rem}
Let $G$ be a finite group, $B$ a block algebra of $\OG$, $(P,e_P)$ a
maximal $(G,B)$-Brauer pair with associated fusion system $\CF$ on $P$,
and let $Z$ be a central $p$-subgroup of $G$. Let $\eta$ be a linear
character of $Z$, and suppose that $\eta$ extends to a linear character
$\hat\eta$ of $P$ satisfying $\foc(\CF)\leq\Ker(\hat\eta)$.
The proof of Lemma \ref{lem:count} is based on the fact that the
$*$-construction $\chi\mapsto$ $\hat\eta * \chi$ yields a bijection
$\Irr(B|1_Z)\to$ $\Irr(B|\eta)$. There is some slightly more structural
background to this. For $\chi\in$ $\Irr(B)$, denote by $e(\chi)$ the
corresponding central primitive idempotent in $K\tenO B$. Set
$$e_1 = \sum_{\chi\in\Irr_0(B|1_Z)}\ e(\chi)\ , \ \ \ 
e_\eta = \sum_{\chi\in\Irr_0(B|\eta)}\ e(\chi)\ .$$
Identify $B$ to its image in $K\tenO B$. Multiplying $B$ by
the central idempotents $e_1$ and $e_\eta$ in $K\tenO B$ yields
the two $\CO$-free $\CO$-algebra quotients $Be_1$ and $Be_\eta$ of $B$.
By \cite[Theorem 1.1]{Linfoc}, there is an $\CO$-algebra automorphism
$\alpha$ of $B$ which induces the identity on $k\tenO B$ and which acts
on $\Irr(B)$ as the map $\chi\to$ $\hat\eta * \chi$. Thus the extension
of $\alpha$ to $K\tenO B$ sends $e_1$ to $e_\eta$ and hence induces an
$\CO$-algebra isomorphism
$$B e_1 \cong B e_\eta\ .$$
\end{rem}

We conclude this note with an observation regarding canonical height
zero characters in nilpotent blocks, based in part on some of the above
methods.

Let $G$ be a finite group, $B$ a block algebra of $\OG$, $P$ a defect
group of $B$ and $i\in$ $B^P$ a source idempotent of $B$. Denote by
$\CF$ the fusion system of $B$ on $P$ determined by the choice of $i$.
Suppose that $K$ is a splitting field for all subgroups of $G$.
For $V$ a finitely generated $\CO$-free $B$-module, denote by
$$\Delta_{V,P,i} : P\to \CO^\times$$
the map sending $u\in$ $P$ to the determinant of the $\CO$-linear
automorphism of $iV$ induced by the action of $u$ on $V$ (this makes
sense since all elements in $P$ commute with $i$). By standard
properties of determinants, this map depends only on the
$(B^P)^\times$-conjugacy class of $i$ and the isomorphism class of the
$K\tenO B$-module $K\tenO V$. Thus if $V$ affords a character $\chi\in$
$\Irr(B)$, we write $\Delta_{\chi,P,i}$ instead of $\Delta_{V,P,i}$.

\begin{pro} \label{mainpoint}
With the notation above, let $\chi\in\Irr(B)$ and $\eta\in$
$\Irr(P/\foc(P))$. Regard $\eta$ as a linear character of $P$. We have
$$\Delta_{\eta * \chi, P, i} = \eta^{\chi(i)} \Delta_{\chi,P,i}\ .$$
\end{pro}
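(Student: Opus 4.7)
The plan is to use the Broué--Puig $*$-construction to produce an explicit source module for $\eta*\chi$, and then to compute the determinantal character directly.

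First I would fix a $KG$-module $V$ affording $\chi$. Since $i\in B^P$ commutes with $P$, the idempotent $i$ acts on $V$ as a $KP$-linear projector, so $iV$ is a $KP$-submodule of $V$ with $\dim_K(iV)=\tr_V(i)=\chi(i)$, and by definition $\Delta_{\chi,P,i}(u)=\det_K(u|_{iV})$ for $u\in P$.

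The key input I would invoke is the source-module description of the $*$-construction (\cite{BrPuloc}, \cite{Rob08}): under the hypothesis $\foc(\CF)\leq\Ker(\eta)$, the character $\eta*\chi$ can be realized by a $KG$-module $V'$ such that $iV'=iV$ as $K$-vector spaces, with each $u\in P$ acting on $iV'$ as $\eta(u)$ times its action on $iV$. A conceptual route to such a $V'$ is via the $\CO$-algebra automorphism $\alpha$ of $B$ recalled in the preceding remark (from \cite[Theorem 1.1]{Linfoc}), which can be arranged to fix the source idempotent $i$ and to scale the image of every $u\in P$ in the source algebra $iBi$ by the scalar $\eta(u)$; twisting the $B$-action on $V$ along $\alpha$ then yields $V'$.

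Granting this, the conclusion would be a one-line determinant calculation: for $u\in P$, multilinearity of the determinant in a basis of $iV$ gives
$$\Delta_{\eta*\chi,P,i}(u) = \det_K\bigl(\eta(u)\cdot u|_{iV}\bigr) = \eta(u)^{\dim_K(iV)}\det_K\bigl(u|_{iV}\bigr) = \eta(u)^{\chi(i)}\,\Delta_{\chi,P,i}(u).$$

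The hard part will be pinning down the source-module description of $\eta*\chi$, that is, verifying that the automorphism $\alpha$ really scales the image of $P$ in $iBi$ by $\eta$. Once that is established the rest is routine; indeed, this is essentially the same mechanism underlying the identity $\delta|_Z=\eta^n$ in the proof of Lemma \ref{lem:height}, now globalised from the central $p$-subgroup $Z$ to the full defect group $P$.
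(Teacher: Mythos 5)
Your proposal is correct and follows essentially the same route as the paper: both invoke the $\CO$-algebra automorphism $\alpha$ of $B$ from \cite[Theorem 1.1]{Linfoc} satisfying $\alpha(ui)=\eta(u)ui$ (so in particular $\alpha(i)=i$), twist $V$ by $\alpha$ to realize $\eta*\chi$, and conclude by the multilinearity of the determinant on $iV$ together with $\rank_\CO(iV)=\chi(i)$. The step you flag as ``the hard part'' is exactly what \cite[Theorem 1.1]{Linfoc} supplies, so no additional verification is needed.
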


\begin{proof}
The statement makes sense as the value of $\chi$ on an idempotent is a
positive integer. Let $V$ be an $\CO$-free $\OG$-module affording
$\chi$. By \cite[Theorem 1.1]{Linfoc} there exists an $\CO$-algebra
automorphism $\alpha$ of $B$ such that the module $V^\alpha$ (obtained
from twisting $V$ by $\alpha$) affords $\eta * \chi$ and such that
$\alpha(ui)=$ $\eta(u)ui$ for all $u\in$ $P$. Since in particular
$\alpha(i)=i$, it follows that
$$\Delta_{V^\alpha, P,i}(u)= \Delta_{V.P,i}(\eta(u)u)$$
for all $u\in$ $P$. The result follows as $\rank_\CO(iV)=$ $\chi(i)$.
\end{proof}

Denote by $\Irr'(B)$ the set of all $\chi\in$ $\Irr(B)$ such that
$\Delta_{\chi,P,i}$ is the trivial map (sending all elements in $P$ to
$1$). Set $\Irr'_0(B)=$ $\Irr'(B)\cap \Irr_0(B)$.
The maximal local pointed groups on $B$ are $G$-conjugate. Thus if $P'$
is any other defect group of $B$ and $i'\in$ $B^{P'}$ a source
idempotent, then there exist $g\in$ $G$ and $c\in (B^{P'})^\times$ such
that $P'=$ $gPg^{-1}$ and $i'=$ $cg i g^{-1}c^{-1}$. Therefore the
map $\Delta_{V,P,i}$ is trivial if and only if the map 
$\Delta_{V,P',i'}$ is trivial, and hence the sets $\Irr'(B)$ and 
$\Irr'_0(B)$ are independent of the choice of $P$ and $i$. The 
following is immediate.

\begin{pro}
The sets $\Irr'(B)$ and $\Irr'_0(B)$ are invariant under any automorphism
of $G$ which stabilises $B$.
\end{pro}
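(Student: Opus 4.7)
The plan is to transport the defect data $(P,i)$ along an automorphism $\alpha\in\Aut(G)$ stabilising $B$ and then invoke the independence of $\Irr'(B)$ from the choice of defect group and source idempotent. First I would fix $\alpha\in\Aut(G)$ with $\alpha(B)=B$. Since $\alpha$ is an algebra automorphism of $\CO G$ sending $B$ to $B$, it sends the defect group $P$ of $B$ to the defect group $\alpha(P)$ and the source idempotent $i\in B^P$ to a source idempotent $\alpha(i)\in B^{\alpha(P)}$ of $B$. The natural action of $\alpha$ on $\Irr(B)$ sends $\chi$ to $\chi^\alpha:=\chi\circ\alpha^{-1}$; if $V$ is an $\CO G$-module affording $\chi$, then the twisted module $V^\alpha$ (with $g\cdot v=\alpha^{-1}(g)v$) affords $\chi^\alpha$.

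Next I would compare determinant maps. A direct computation shows that for $u\in\alpha(P)$,
$$\Delta_{\chi^\alpha,\alpha(P),\alpha(i)}(u) = \Delta_{\chi,P,i}(\alpha^{-1}(u))\ ,$$
because multiplication by $\alpha(i)$ on $V^\alpha$ corresponds, under the identification of underlying $\CO$-modules, to multiplication by $i$ on $V$, and the action of $u\in\alpha(P)$ on $V^\alpha$ is the action of $\alpha^{-1}(u)\in P$ on $V$. It follows at once that $\Delta_{\chi^\alpha,\alpha(P),\alpha(i)}$ is the trivial map on $\alpha(P)$ if and only if $\Delta_{\chi,P,i}$ is the trivial map on $P$.

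Finally, as noted in the paragraph preceding the statement, the set $\Irr'(B)$ does not depend on the choice of maximal local pointed group (that is, of the pair $(P,i)$), because any two choices are conjugate under $G$ combined with a unit of $B^{P'}$. Hence $\chi\in\Irr'(B)$ if and only if $\chi^\alpha\in\Irr'(B)$, proving that $\alpha$ permutes $\Irr'(B)$. Since $\alpha$ also preserves the defect and thus the height of characters in $B$, it permutes $\Irr_0(B)$ as well, and therefore it permutes $\Irr'_0(B)=\Irr'(B)\cap\Irr_0(B)$. The only delicate point is the verification of the determinant formula above; once that is in hand, the rest is a direct consequence of the independence of $\Irr'(B)$ from the chosen data.
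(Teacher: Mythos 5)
Your proof is correct and fills in exactly the reasoning the paper leaves implicit by saying ``The following is immediate'': an automorphism $\alpha$ stabilising $B$ transports $(P,i)$ to another maximal local pointed group $(\alpha(P),\alpha(i))$, the determinant maps match up via $\Delta_{\chi^\alpha,\alpha(P),\alpha(i)}=\Delta_{\chi,P,i}\circ\alpha^{-1}$, and the already-established independence of $\Irr'(B)$ from the choice of $(P,i)$ finishes the argument. The verification of the determinant identity and the observation that $\alpha$ preserves heights are both correct, so this is a faithful unpacking of the paper's intended one-line proof.
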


The next result shows that if $B$ is nilpotent, then $\Irr'_0(B)$
consists of a single element.

\begin{pro}
Suppose that $B$ is nilpotent. Then $|\Irr'_0(B)|=1$. Moreover, if $p$
is odd, then the unique element of $\Irr'_0(B)$ is the unique
$p$-rational height zero character in $B$.
\end{pro}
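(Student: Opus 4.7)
The plan is to combine Puig's theorem on nilpotent blocks with Proposition \ref{mainpoint}. Since $\CF = \CF_P(P)$ is nilpotent, one has $\foc(\CF) = [P,P]$, so $\Irr(P/\foc(\CF))$ is the full group of linear characters of $P$. By Puig's theorem $iBi \cong \CO P$ as interior $P$-algebras, giving a Morita equivalence $B \sim \CO P$ under which the height zero characters of $B$ correspond bijectively to linear characters of $P$. In particular $|\Irr_0(B)| = |P/[P,P]|$, and for $\chi \in \Irr_0(B)$ afforded by a module $V$, the $\CO P$-module $iV$ will be one-dimensional, so $\chi(i) = \dim_K(iV) = 1$.

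Plugging $\chi(i) = 1$ into Proposition \ref{mainpoint} will give
\[\Delta_{\eta * \chi, P, i} = \eta \cdot \Delta_{\chi, P, i}\]
for every $\chi \in \Irr_0(B)$ and every linear character $\eta$ of $P$. From this I read off first that the $*$-action of $\Irr(P/[P,P])$ on $\Irr_0(B)$ is free, hence, by the cardinality equality, also transitive, and second that $\chi \mapsto \Delta_{\chi, P, i}$ is an equivariant bijection between these two $\Irr(P/[P,P])$-torsors. In particular, exactly one $\chi_0 \in \Irr_0(B)$ has $\Delta_{\chi_0, P, i}$ trivial, giving $|\Irr'_0(B)| = 1$.

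For the second statement (with $p$ odd), I will argue Galois-theoretically. Let $\Gamma'$ denote the Galois subgroup acting trivially on $p'$-th roots of unity, so that $p$-rationality amounts to $\Gamma'$-invariance. For $\sigma \in \Gamma'$ the idempotent $\sigma(i)$ is again a source idempotent of $B$, and naturality of determinants gives $\Delta_{\sigma(\chi), P, \sigma(i)} = \sigma \circ \Delta_{\chi, P, i}$. Using that $\Irr'_0(B)$ is independent of the choice of source idempotent, this shows $\Gamma'$ permutes $\Irr'_0(B)$; being a singleton, $\{\chi_0\}$ is pointwise $\Gamma'$-fixed, so $\chi_0$ is $p$-rational. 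For uniqueness I will write any $p$-rational $\chi \in \Irr_0(B)$ as $\chi = \eta * \chi_0$ for a unique linear character $\eta$ of $P$, apply Galois equivariance of the $*$-construction together with $\sigma(\chi_0) = \chi_0$ to conclude $\sigma(\eta) = \eta$ for all $\sigma \in \Gamma'$, and then use that the only $\Gamma'$-fixed such $\eta$ is trivial (since $\eta$ takes values in $p$-power roots of unity and $p$ is odd).

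The main technical step will be the Galois equivariance of the $*$-construction, which is not stated explicitly in the paper; it should follow by tracking how a Galois automorphism interacts with the $\CO$-algebra automorphism $\alpha$ of $B$ from \cite[Theorem 1.1]{Linfoc}.
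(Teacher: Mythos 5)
Your first paragraph contains a genuine error. Puig's structure theorem for a nilpotent block $B$ gives an isomorphism of interior $P$-algebras $iBi \cong S \tenO \CO P$, where $S = \End_\CO(M)$ for an indecomposable endo-permutation $\CO P$-module $M$ of vertex $P$; the module $M$ need not be trivial, so $iBi \cong \CO P$ is false in general, and consequently $\chi(i) = \dim_K(iV)$ equals $\rank_\CO(M)$, which is prime to $p$ but usually larger than $1$. The paper instead invokes Picaronny--Puig (\cite{PiPu}) to obtain that $\chi(i)$ is prime to $p$, which is all that is needed: since linear characters of $P$ have $p$-power order, $\eta^{\chi(i)} = \zeta^{\chi(i)}$ still forces $\eta = \zeta$, so the map $\chi \mapsto \Delta_{\chi,P,i}$ is a bijection $\Irr_0(B) \to \Irr(P/[P,P])$ (now only twisted-equivariant, via $\eta \mapsto \eta^{\chi(i)}$, rather than equivariant). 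Your freeness-plus-cardinality reasoning survives verbatim once you replace $\chi(i)=1$ by $\chi(i)$ prime to $p$, so the gap is local and repairable, but as written the step fails.

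For the second assertion your route is genuinely different from the paper's. The paper takes the uniqueness of the $p$-rational height zero character as known, and uses the structure theory of nilpotent blocks to realize $\chi_0$ over the Witt vectors $W(k)$; since $i$ may be chosen in $W(k)G$, the determinant $\Delta_{\chi_0,P,i}$ takes values in $W(k)^\times$, which contains no nontrivial $p$-power roots of unity when $p$ is odd, so $\Delta_{\chi_0,P,i}$ is trivial and $\chi_0 \in \Irr'_0(B)$. Your Galois argument instead derives $p$-rationality of the unique element of $\Irr'_0(B)$ from the naturality $\Delta_{\sigma(\chi),P,\sigma(i)} = \sigma \circ \Delta_{\chi,P,i}$, and then reproves the uniqueness of the $p$-rational height zero character; this is more self-contained in that it avoids the deep realization over $W(k)$, but it requires more care than acknowledged: one must pin down which Galois group acts on $\CO$ (and hence on $B$, on $B^P$, and on source idempotents) in a way compatible with the Brauer homomorphism, and the equivariance of the Brou\'e--Puig $*$-construction under this action, which you correctly flag as the missing technical step, does need an argument. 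Neither of these is insurmountable, but in contrast the paper's Witt vector argument is short precisely because it sidesteps them.
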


\begin{proof}
Let $\chi \in \Irr_0(B)$. Since $i$ is a source idempotent of $B$,
$\chi(i)$ is prime to $p$ (see \cite{PiPu}).
Hence if $\eta$, $\zeta$ are linear characters of $P$, then
$\eta^{\chi(i)} =\zeta^{\chi(i)}$ implies that $\eta =\zeta $. Since $B$
is nilpotent, we have that $\foc(\CF) =[P,P]$ and $|\Irr_0 (B) |=$
$|P: [P,P] |$. Thus, by Proposition \ref{mainpoint},  the map
$\chi \mapsto \Delta_{\chi, P,  i}$ is a  bijection from $\Irr_0(B)$  to
$\Irr(P/[P,P])$. This proves the first assertion.

Suppose that $p$ is odd. Let $\chi_0$  be the  unique $p$-rational
character in $\Irr_0(B)$.  Let $W(k) $ be the ring of Witt vectors in
$\CO$. By the structure theory of nilpotent blocks  (see \cite{Punil})
there exists a $W(k) G$-module $V$ affording $\chi_0$. Since the source
idempotent $i$ can be chosen to be in $W(k)G$, we have that
$\Delta_{\chi, P, i}$ takes values in $W(k)$. Since $p$ is odd, it
follows that the trivial character of $P$ is the unique linear character
of $P$ which takes values in $W(k)$.
\end{proof}

\bigskip

\textbf{Acknowledgements.}  
We thank Gunter Malle for helpful comments  on the first version of the 
paper.

\end{document}